\newtheorem{theorem}{Theorem}[section]
\newtheorem{lemma}[theorem]{Lemma}
\newtheorem{conjecture}[theorem]{Conjecture}
\theoremstyle{definition}
\theoremstyle{remark}
\newtheorem{remark}[theorem]{Remark}
\newcommand{\diag}{\mbox{diag}}
\newcommand{\Imm}{\mathrm{Imm}}
\newcommand{\ot}{\otimes}
\newcommand{\si}{\sigma}
\newcommand{\GL}{\mathrm{GL}}
\newcommand{\Mat}{\mathrm{Mat}}
\newcommand{\Tc}{\mathcal{T}}
\newcommand{\bal}{\begin{aligned}}
\newcommand{\eal}{\end{aligned}}
\newcommand{\beq}{\begin{equation}}
\newcommand{\eeq}{\end{equation}}
\newcommand{\ben}{\begin{equation*}}
\newcommand{\een}{\end{equation*}}
\newcommand{\Sym}{\mathfrak S}
\newcommand{\CC}{\mathbb{C}}
\begin{document}
	
	\title[Immanant inequalities and weight spaces]
	{Immanant inequalities and weight spaces}
	
	\author{Naihuan Jing}
	\address{Department of Mathematics, North Carolina State University, Raleigh, NC 27695, USA}
	\email{jing@ncsu.edu}
	
		\author{Yinlong Liu}
	\address{Department of Mathematics, Shanghai University, Shanghai 200444, China}
	\email{yinlongliu@shu.edu.cn}
	
	\author{Jian Zhang}
	\address{School of Mathematics and Statistics,
		Central China Normal University, Wuhan, Hubei 430079, China}
	\email{jzhang@ccnu.edu.cn}
	
	\thanks{{\scriptsize
			\hskip -0.6 true cm MSC (2020): Primary: 20G05 Secondary: 15A15, 17B35, 05E10
    \newline Keywords: Immanant, weight spaces, Schur-Weyl duality, immanant inequality.
	}}

\begin{abstract}
 We first obtain a trace formula for immanants of generalized principal submatrix of any complex matrix based on any weight space for finite dimensional representations of  the general linear group. Our trace formula contains Kostant's famous formula for immanants on $0$-weight spaces as special case. We then present a criterion for non-vanishing immanants for any generalized principal submatrix of positive definite Hermitian  or nonsingular totally nonnegative matrices, which strengthened the well-known results of Schur and Stembridge. Furthermore, we
 present an inequality that contains Kostant, Schur and Stembridge's famous inequalities as special cases.
\end{abstract}
	\maketitle
\section{Introduction}

In this paper, we will prove three results on the immanants of postive semidefinite matrices or totally nonnegative matrices
which generalize and strengthen some well-known results of Schur, Stembridge and Kostant.

Immanants, a class of matrix functions generalizing determinants and permanents, were first formulated by Schur during his fundamental paper \cite{S}. The term ``immanant'' was formally introduced by Littlewood and Richardson \cite{Li,LR} in their foundational study of representation theory of the symmetric groups.

Let  $\mathfrak{S}_n$ be the symmetric group  acting on the index set $\{1,\ldots,n\}$. For the complex character  $\chi^{\lambda}: \Sym_n\rightarrow \CC$  of the irreducible representation parameterized by partition $\lambda$ of $n$ and $n \times n$ complex matrix
 $A=(a_{ij})$, the $\lambda$-immanant of $A$ is defined by
\begin{equation}\label{eq:immanant}
    \mathrm{Imm}_{\chi^{\lambda}}(A) = \sum_{\sigma\in \mathfrak{S}_n}\chi^{\lambda}(\sigma)\prod_{i=1}^{n}a_{\sigma(i),i}.
\end{equation}

%A central problem for immanant involves establishing inequalities among these functions.
%A classical result is Fischer's inequality for positive semidefinite matrices \cite{F}: let
% $B$ is an $(n+p)\times (n+p)$ positive semidefinite Hermitian matrix partitioned as
%\[
%B = \begin{pmatrix}
%    B_{11} & B_{12} \\
%    B_{21} & B_{22}
%\end{pmatrix},
%\]
%where $B_{11}\in M_n(\mathbb{C})$ and $B_{22}\in M_p(\mathbb{C})$ are positive semidefinite. Then
%\begin{equation}\label{eq:fischer}
%    \det(B) \leq \det(B_{11})\det(B_{22}),
%\end{equation}
%the equality  if and only if  $B_{12}=0$ or  $B$ has a zero row/column.
For positive semidefinite Hermitian matrice $A$,
Schur  \cite{S} established a fundamental inequality:
\begin{equation}\label{eq:schur}
   \chi^\lambda(1) \det(A) \leq \mathrm{Imm}_{\chi^\lambda}(A),
\end{equation}
demonstrating that the determinant (corresponding to $\lambda=(1^n)$) is the minimal among all normalized immanants. Later, Stembridge \cite{St1} proved the same inequality for totally nonnegative matrices (real matrices with nonnegative minors).
He conjectured a stronger inequality.
%(No correspondingly strong statement holds for positive definite matrices).
\begin{conjecture}\cite{St2}
    If $A$ is a totally nonnegative $n\times n$ matrix and $\lambda\vdash n$, then
    \begin{equation}
        \Imm_{\phi^{\lambda}}(A)\geq 0,
    \end{equation}
    where  $\Imm_{\phi^{\lambda}}$ is the monomial immanant (i.e. the image of $\phi^{\lambda}$  is the monomial symmetric function $m_{\lambda}$,  under the characteristic map from the algebra of class functions of $\Sym_n$ to the algebra of symmetric functions).
\end{conjecture}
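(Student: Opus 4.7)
The plan is to leverage the weight-space trace formula announced in the abstract (the generalization of Kostant's formula) together with Schur--Weyl duality, but to recast $\phi^\lambda$ so that its monomial immanant becomes visibly a trace of a totally nonnegative operator. Expanding via the inverse Kostka matrix gives
\begin{equation*}
\phi^\lambda \;=\; \sum_{\mu \vdash n} K^{-1}_{\lambda\mu}\, \chi^\mu,
\qquad
\Imm_{\phi^\lambda}(A) \;=\; \sum_{\mu \vdash n} K^{-1}_{\lambda\mu}\, \Imm_{\chi^\mu}(A).
\end{equation*}
Since the entries $K^{-1}_{\lambda\mu}$ change sign, invoking Stembridge's $\chi^\mu$-inequality termwise is hopeless; the whole point is that the cancellation among these signs must still leave a nonnegative total.

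First, I would try to realize $\Imm_{\phi^\lambda}(A)$ directly as the trace of $A^{\otimes n}$ acting on a specific weight subspace (or signed subquotient) of an appropriate tensor-power module, using the Schur--Weyl machinery that already powers the earlier trace formula of the paper. Concretely, if one can exhibit an element $e_\lambda$ in the group algebra $\CC[\Sym_n]$, or an idempotent-like combination of weight-space projectors, whose trace on $V^{\otimes n}$ with respect to $A^{\otimes n}$ reproduces $\Imm_{\phi^\lambda}(A)$, the inequality reduces to the positivity of that single trace.

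Second, I would feed in the TNN hypothesis through the Lindström--Gessel--Viennot/Loewner--Whitney factorization: every TNN matrix is a product of elementary Jacobi matrices with nonnegative parameters, and each such factor acts on $V^{\otimes n}$ by a nonnegative combinatorial operator in a distinguished basis. If $e_\lambda$ can be arranged so that its matrix entries in that basis are nonnegative, the trace is a sum of nonnegative quantities and the conjecture follows. This mirrors how the paper presumably recovers Stembridge's $\chi^\lambda$-inequality as a special case of its general theorem.

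The principal obstacle — and the reason this conjecture has resisted elementary attack since \cite{St2} — is step one: the monomial characters $\phi^\lambda$ are genuinely virtual, so any module-theoretic realization of $\Imm_{\phi^\lambda}$ must secretly encode the alternating sum of inverse Kostkas as a positive phenomenon. I expect that making this honest will force one either to pass through the Kazhdan--Lusztig immanant basis (in which $\phi^\lambda$ is known to expand with controlled signs) or to find a natural filtration of a weight space whose associated graded yields monomial characters. Without such a ``positive model'' for $\phi^\lambda$ the trace formula alone, however flexible, cannot conclude, and bridging this gap is what I would expect to occupy the bulk of the argument.
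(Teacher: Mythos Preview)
The paper does \emph{not} prove this statement: it is quoted verbatim as a \textbf{Conjecture} from \cite{St2} and left open. There is no ``paper's own proof'' to compare against; the authors use it only as context before turning to the results they do establish (the trace formula, the nonvanishing criterion, and the orbit inequality). So your proposal is not competing with anything in the paper --- you are attempting an open problem.

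As for the proposal itself, you have correctly located the obstruction. The trace formula of Theorem~\ref{q-Kostant-thm} gives, for each irreducible character $\chi^\mu$, a realization of $\Imm_{\chi^\mu}(A_I)$ as a genuine trace on a weight space, and the Loewner--Whitney factorization then supplies nonnegativity. But the passage $\phi^\lambda=\sum_\mu K^{-1}_{\lambda\mu}\chi^\mu$ makes $\Imm_{\phi^\lambda}$ an \emph{alternating} combination of such traces, and nothing in the paper's machinery cancels those signs: weight-space projectors $\mathcal P_\mu$ are orthogonal, so summing them with coefficients $K^{-1}_{\lambda\mu}$ just reproduces the signed sum rather than a trace on any honest subspace. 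Your suggestion of a filtration whose graded pieces realize monomial characters, or a detour through the Kazhdan--Lusztig basis, is exactly the missing ingredient that the present paper does not supply and that the literature has not yet found. In short, your plan is a reasonable outline of what a proof \emph{would} have to accomplish, but step one remains a genuine gap, and until it is filled the argument does not conclude.
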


Let \( h_n \) denote the complete homogeneous symmetric function of degree \( n \) in countably many variables \( X = (x_1, x_2, \ldots) \), \( \mu \) and \( \nu \) be partitions with \( n \) parts. The Jacobi--Trudi matrices are of the form
\[
H_{\mu/\nu}(X) = [h_{\mu_i - \nu_j}(X)]_{i,j=1}^n.
\]
Since the minors of a Jacobi-Trudi matrix are skew Schur functions, they are
nonnegative in the sense of being nonnegative linear combinations of Schur functions.
Haiman \cite{Haiman} showed that \( \operatorname{Imm}_{\lambda}(H_{\mu/\nu}) -  \chi^\lambda(1) \det(H_{\mu/\nu}) \) is a nonnegative linear combination of Schur functions.

A longstanding open problem is  to determine the maximal  immanant.
In 1966, Lieb \cite{Lieb} conjectered that
\begin{conjecture}\cite{Lieb}
    For any positive semidefinite Hermitian $A $ and partition $\lambda \vdash n$, the permanent dominates all $\lambda$-immanants:
    \begin{equation}\label{eq:lieb}
        \chi^\lambda(1)\mathrm{per}(A) \geq \mathrm{Imm}_{\chi^\lambda}(A).
    \end{equation}
\end{conjecture}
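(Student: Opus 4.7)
The plan is to exploit a Cholesky factorization $A = B^{*}B$ to turn both sides of Lieb's inequality into squared norms of projections inside $V^{\otimes n}$ with $V = \CC^n$, and then attempt a weight-space comparison using the trace formula from Section~2. Writing $v := B^{\otimes n}(e_1 \otimes \cdots \otimes e_n)$ and letting $e_\mu \in \CC[\Sym_n]$ denote the primitive central idempotent attached to $\chi^\mu$, a direct expansion of \eqref{eq:immanant} gives
\[
\per(A) \;=\; n!\,\|e_{(n)} v\|^2, \qquad \Imm_{\chi^\lambda}(A) \;=\; \frac{n!}{\chi^\lambda(1)}\,\|e_\lambda v\|^2.
\]
Hence Lieb's conjecture \eqref{eq:lieb} is equivalent to the projection inequality $\chi^\lambda(1)\,\|e_{(n)} v\| \geq \|e_\lambda v\|$ \emph{restricted to those $v \in V^{\otimes n}$ of the above decomposable form}.

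The next step would be to decompose $V^{\otimes n}$ under the $\GL_n$-action and apply the weight-space trace formula of this paper blockwise to each isotypic component. Since the seed vector $e_1 \otimes \cdots \otimes e_n$ has $\GL_n$-weight $(1,1,\ldots,1)$, Schur--Weyl duality identifies $e_\lambda v$ with a vector lying entirely in the $\lambda$-isotypic component, and the weight refinement expresses $\|e_\lambda v\|^2$ as a sum over weight spaces indexed by column-strict tableaux of shape $\lambda$. One would then seek a term-by-term (or at least block-by-block) domination of the $\lambda$-sum by $\chi^\lambda(1)^{2}$ times the symmetric-tensor sum, possibly via a weighted RSK-style injection from $\lambda$-tableaux into contents of $\mathrm{Sym}^n V$. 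The Stembridge-type non-vanishing criterion already obtained in this paper for Schur's inequality suggests that the refined bookkeeping provided by the weight-space trace formula is the right vehicle for such a term-by-term comparison.

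The main obstacle is, of course, that Lieb's conjecture has been open since 1966 and is known only for hook shapes (Lieb, later sharpened by Pate) together with a handful of sporadic cases; this indicates that any uniform argument must use the specific ``Cholesky'' structure of $v = B^{\otimes n}(e_1\otimes\cdots\otimes e_n)$ in an essential way, because the inequality $\chi^\lambda(1)\|e_{(n)}w\| \geq \|e_\lambda w\|$ provably fails for generic $w \in V^{\otimes n}$. The crux of the difficulty is that the central idempotents $e_\mu$ do not respect decomposable tensors in any functorial way, so the positivity of $B^{*}B$ does not transparently propagate to the isotypic projectors. A realistic intermediate goal is therefore to combine the weight-space trace formula with the non-vanishing criterion proved earlier to settle new families of $\lambda$ (for example, near-rectangular shapes, or partitions whose zero-weight multiplicities admit a clean Kostant-type expression), leaving the general case as an outstanding open problem.
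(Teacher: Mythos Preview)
The statement you are attempting to prove is explicitly recorded in the paper as a \emph{Conjecture} (Lieb's permanental dominance conjecture); the paper does not prove it, but only remarks that it is known for $n\le 13$ and cites \cite{Pate1,Pate2,Wan,Z}. So there is no ``paper's own proof'' to compare against.

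Your proposal is not a proof either, and you say so yourself: after the (correct and standard) reformulation $\chi^\lambda(1)\per(A)\ge \Imm_{\chi^\lambda}(A)\iff \chi^\lambda(1)\,\|e_{(n)}v\|\ge \|e_\lambda v\|$ for $v=B^{\otimes n}(e_1\otimes\cdots\otimes e_n)$, the remainder is a sketch of what one \emph{might} try, together with an explicit acknowledgement that the inequality fails for generic $w\in V^{\otimes n}$, that the conjecture has been open since 1966, and that a ``realistic intermediate goal'' is to settle only special families of $\lambda$. That is an honest assessment of the state of the art, but it is not a proof of \eqref{eq:lieb}. The suggested mechanism---a term-by-term domination over weight spaces via some RSK-type injection---is precisely the kind of uniform combinatorial argument that is known to be too strong: the isotypic projectors $e_\lambda$ do not interact well enough with decomposable tensors to permit such a blockwise comparison, which is why all existing partial results (hooks, near-hooks, small $n$) rely on ad hoc identities or delicate analytic estimates rather than on Schur--Weyl bookkeeping alone. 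In short, your write-up is a reasonable research outline but contains no argument that closes the gap, and the paper makes no claim to close it either.
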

This conjecture has been proved for $n \leq 13$ \cite{Pate1, Pate2}.  A comprehensive discussion of Lieb's permanent dominace conjecture   and related conjectures can be found in \cite{Wan,Z}.
%Given any  either positive definite or non-singular totally positive matrix $A\in \Mat_{n\times n}(\CC)$, from  Schur's and Stembridge's inequalities, we know that the $\lambda$-immanants of its any generalized principal  submatrix $A_I$ are  non-negative, where $I$ is any multiset of $[n]$.

Based on the inequalities established by Schur and Stembridge, it is known that the immanant of a singular matrix (whether positive semidefinite or totally nonnegative) is nonnegative. Knowing whether they are nontrivial or not will provide further insight to Lieb's conjecture.
In this work, we first provide a sufficient condition for the immanant of such a singular matrix to be {\it strictly positive}, which then strengthens
Schur and Stembridge's results.

On the other hand, Kostant \cite{Ko} has focused on an interpretation of immanants based on 0-weight spaces for finite dimensional representations of $\mathrm{SL}(n,\CC)$  and gives a different proof of the inequalities of Schur and  Stembridge \cite{S,St1} as an application.
Kostant's formulation also provides a trace formula for the $\lambda$-immanants without repeating indices.
Inspired by the work of Kostant,
we recently generalized the trace formula to quantum coordinate algebra in \cite{JLZ}.

In the second part, with the help of the Schur-Weyl duality, we will give a weight space interpretation for immanants of any generalized principal  submatrix of complex matrix in the general situation, i.e. for all weight space situation.
The new trace formula leads to a new criterion on the nonvanishing of $\Imm_{\chi^{\lambda}}(A_I)$, and we show $\Imm_{\chi^{\lambda}}(A_I)=0$ or not
only depends on whether the dimension of the weight space corresponding to $I$ in the highest weight representation $U^{\lambda}$ of $\GL_n$ is nonzero. In other words,     $\Imm_{\chi^{\lambda}}(A_I)>0$ if and only if $\lambda$ dominates the weight corresponding to $I$.   In view of Stembridge's conjecture, this also provides a deeper understanding of immanants for totally nonnegative matrices.

Our last main result provides an immanant inequality for either positive semidefinite or totally nonnegative matrix, which generalized the Kostant inequality in $0$-weight space to the orbit $O_{\mu}$ of any weight space $\mu$ under the action of Weyl group.
In particular, Schur's and Stembridge's inequalities can be acquired as special cases.

The paper is organized as follows. In section 2, we obtain a trace formula for the immanants of any generalized principal  submatrix. In section 3, we give a criterion for the nonvanishing of immanants of certain  positive semidefinite Hermitian or totally nonnegative matrices. Finally,
we prove an immanant inequality for either positive semidefinite or  totally nonnegative matrix.

\section{Immanants of generalized submatrix}
\subsection{Schur-Weyl duality}
By the classical Schur-Weyl duality,  $(\mathbb{C}^{n})^{\ot m}$ is a multiplicity free $\GL_n\times  \Sym_m $-module and there exists a decomposition of the space of tensors
\beq\label{Schur-Weyl-decom}
(\mathbb{C}^{n})^{\ot m}\cong\bigoplus_{\lambda \vdash m\atop \text{depth}(\lambda) \leq n} U^{\lambda}\ot V^{\lambda},
\eeq
where $(\pi_{\lambda},U^{\lambda})$ is the simple left $\GL_n$-module of highest weight $\lambda$
and $(\rho_{\lambda},V^{\lambda})$ is the simple left $\Sym_m$-module indexed by $\lambda$.

Let $\{e_i\}$ be the basis of $\mathbb C^n$. Define an inner product $\langle \cdot | \cdot \rangle$ on  $(\mathbb{C}^{n})^{\ot m}$ by
\beq\label{inner-product}
\bal
\langle e_{i_1}\ot \cdots \ot e_{i_m}, e_{j_1}\ot \cdots \ot e_{j_m}  \rangle&=\prod_{1\leq k \leq m}\delta_{i_k,j_k},
\eal
\eeq
where $i_1, \ldots, i_m, j_1, \ldots, j_m\in \{1, 2, \ldots, n\}$.
We introduce $*$-operations for $\Sym_m $  and $\GL_n$ respectively. Let  $\si^{*}=\si^{-1} \in \Sym_m$. And denote by $A^*$  the conjugate transpose  of matrix $A$ in $\GL_n$.  They are involutive anti-automorphisms.
\begin{lemma}\label{adjoint-op}
  The $*$-operations for $\Sym_m$  and $\GL_n$  on the inner product space $(\mathbb{C}^{n})^{\ot m}$ afford a contravariant  inner product, i.e.
\beq
\bal
  &\langle x\cdot  e_{i_1}\ot \cdots \ot e_{i_m}, e_{j_1}\ot \cdots \ot e_{j_m}  \rangle\\
  &=\langle  e_{i_1}\ot \cdots \ot e_{i_m}, x^* \cdot e_{j_1}\ot \cdots \ot e_{j_m}  \rangle,
\eal
\eeq
where $x$ is any element in $\Sym_m$ or $\GL_n$.

\end{lemma}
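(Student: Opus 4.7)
The plan is to verify the contravariance property separately for the two group actions, relying on the tensor-product structure of the inner product. Since the inner product \eqref{inner-product} factorizes as a product over the $m$ tensor slots, the $\GL_n$ case (which acts diagonally) should follow slot-by-slot from the standard fact $\langle Au,v\rangle=\langle u,A^*v\rangle$ on $\mathbb{C}^n$, while the $\mathfrak{S}_m$ case (which acts by permuting slots) should follow by a change of index.

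For the $\GL_n$ part, I would first note that the inner product on $(\mathbb{C}^n)^{\otimes m}$ is the $m$-fold tensor of the standard Hermitian inner product on $\mathbb{C}^n$ with orthonormal basis $\{e_i\}$. Because the action of $A\in\GL_n$ is diagonal, $A\cdot(e_{i_1}\otimes\cdots\otimes e_{i_m})=(Ae_{i_1})\otimes\cdots\otimes(Ae_{i_m})$, both sides of the desired identity factor as $\prod_k\langle A e_{i_k},e_{j_k}\rangle$ and $\prod_k\langle e_{i_k},A^* e_{j_k}\rangle$ respectively; each factor equals $a_{j_k,i_k}$ by a direct expansion, so multiplicativity over slots finishes the case.

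For the $\mathfrak{S}_m$ part, I would use the place-permutation action $\sigma\cdot(e_{i_1}\otimes\cdots\otimes e_{i_m})=e_{i_{\sigma^{-1}(1)}}\otimes\cdots\otimes e_{i_{\sigma^{-1}(m)}}$. Then
\begin{equation*}
\langle \sigma\cdot v,w\rangle=\prod_{k=1}^{m}\delta_{i_{\sigma^{-1}(k)},j_k},
\end{equation*}
and after the substitution $k=\sigma(\ell)$ this product becomes $\prod_{\ell}\delta_{i_\ell,j_{\sigma(\ell)}}$, which is exactly $\langle v,\sigma^{-1}\cdot w\rangle=\langle v,\sigma^*\cdot w\rangle$. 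I expect the only real obstacle to be conventional rather than conceptual: one must pin down the left action of $\mathfrak{S}_m$ consistently (so that $\sigma\tau$ acts as $\sigma$ after $\tau$) and fix the sesquilinearity convention for the inner product. Once these conventions are specified, the computation is a direct index manipulation, and the two cases together establish the lemma.
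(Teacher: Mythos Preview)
Your argument is correct: the $\GL_n$ case reduces slot-by-slot to the defining adjoint identity on $\mathbb{C}^n$, and the $\mathfrak{S}_m$ case is the index substitution you wrote. The paper states this lemma without proof, so there is nothing further to compare; your write-up supplies exactly the routine verification the authors omitted.
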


Let $\Lambda=(\lambda_{ij})_{1 \leq i \leq n\atop 1\leq j \leq i}$  be the  Gelfand-Tsetlin patterns
 satisfying the relations \cite{GT}:
\beq
\lambda_{ij}\geq \lambda_{i-1,j}\geq \lambda_{i,j+1}, \ \ 1\leq j \leq i \leq n.
\eeq

Denote by $\{\xi_{\Lambda}\}$ the Gelfand-Tsetlin  basis of the $\mathfrak{gl}(n)$-module $U^{\lambda}$.
Let $E_{ij}$, $1\leq i,j\leq n$, be the basis elements of $\mathfrak{gl}(n)$ and $E=(E_{ij})_{n\times n}$. Recall the central element $C_n(u)$ of $U(\mathfrak{gl}_n)$ introduced by Capelli \cite{C}:
\begin{equation}
C_n(u)=\sum_{\si\in \Sym_n}sgn(\si)(u+E)_{\sigma(1),1}\cdots (u+E-n+1)_{\si(n),n}.
\end{equation}

These Capelli elements act on the Gelfand-Tsetlin  basis of $U^{\lambda}$ as scalar multiplications, see \cite{Mo}.
\begin{lemma}\label{central-char}
Let $\{\xi_{\Lambda}\}$ be the Gelfand-Tsetlin  basis of $U^{\lambda}$. Then
\beq
 C_k(u)\xi_{\Lambda}=(u+\lambda_{k1})(u+\lambda_{k2}-1)\cdots (u+\lambda_{kk}-k+1)\xi_{\Lambda},  \ \ \text{for} \ 1 \leq k \leq n.
\eeq
\end{lemma}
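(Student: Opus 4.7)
The plan is to establish the formula in three stages: (i) show each coefficient of $C_k(u)$ is central in $U(\gl_k)$, (ii) identify the $\gl_k$-submodule of $U^{\lambda}$ containing $\xi_{\Lambda}$, and (iii) evaluate the resulting scalar by acting on a highest weight vector.

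For stage (i), I would appeal to Capelli's classical theorem that $C_k(u)$ lies in the center of $U(\gl_k)$, where $\gl_k$ denotes the subalgebra generated by $\{E_{ij}:1\leq i,j\leq k\}$. Centrality is verified by showing $[E_{ab},C_k(u)]=0$ for all $1\leq a,b\leq k$, exploiting the column ordering and the descending shifts $u,u-1,\ldots,u-k+1$ built into the definition. For stage (ii), I would use the defining property of the Gelfand-Tsetlin basis with respect to the chain $\gl_1\subset\gl_2\subset\cdots\subset\gl_n$: each $\xi_{\Lambda}$ lies in an irreducible $\gl_k$-submodule of highest weight $\lambda^{(k)}:=(\lambda_{k1},\dots,\lambda_{kk})$ recorded in the $k$-th row of $\Lambda$. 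Combined with (i), Schur's lemma forces
\beq
C_k(u)\xi_{\Lambda} \;=\; c_{\lambda^{(k)}}(u)\,\xi_{\Lambda}
\eeq
for some polynomial depending only on $\lambda^{(k)}$.

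For stage (iii), I would compute $c_{\lambda^{(k)}}(u)$ by acting on a highest weight vector $v^+$ of the $\gl_k$-submodule, where $E_{ii}v^+=\lambda_{ki}v^+$ and $E_{ij}v^+=0$ for $i<j$. The identity permutation $\sigma=\mathrm{id}$ immediately yields the diagonal product $\prod_{i=1}^{k}(u+\lambda_{ki}-i+1)$; the main obstacle is that non-identity $\sigma$ involve lowering operators $E_{\sigma(j),j}$ with $\sigma(j)>j$ which do not annihilate $v^+$, so one must show these terms cancel. I would carry this out by normal-ordering the column factors from left to right, repeatedly applying the relations $[E_{ab},E_{cd}]=\delta_{bc}E_{ad}-\delta_{ad}E_{cb}$; the interplay between the shift pattern and the alternating sign $\mathrm{sgn}(\sigma)$ produces exact cancellations, which is the content of Capelli's classical identity. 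Once this bookkeeping is complete, only $\sigma=\mathrm{id}$ survives on $v^+$, giving the claimed eigenvalue. Alternatively, as a shortcut, I would cite the fully worked version in Molev's monograph~\cite{Mo}, which is already referenced in the excerpt; the entire column-determinant cancellation argument is the only real technical difficulty, and modulo that input the lemma follows immediately from stages (i) and (ii).
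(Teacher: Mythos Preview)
Your outline is correct and in fact more detailed than what the paper provides: the paper does not prove this lemma at all but simply states it with a reference to Molev~\cite{Mo}. Your three-stage argument (centrality of $C_k(u)$ in $U(\gl_k)$, the $\gl_k$-branching structure of the Gelfand--Tsetlin basis, and evaluation on a highest weight vector) is exactly the standard proof found in that reference, so your ``shortcut'' of citing~\cite{Mo} is literally what the paper does.
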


Let $\{v_{\Tc} | sh(\Tc)=\lambda\}$ be Young's orthonormal %the  Gelfand-Tsetlin
basis of $V^{\lambda}$, where $\Tc$ runs through standard Young tableaux of shape $\lambda$.
We can decompose \eqref{Schur-Weyl-decom} into basis vectors:
\beq\label{Schur-Weyl-decom2}
(\mathbb{C}^{n})^{\ot m}\cong\sum_{\lambda \vdash m,\atop \text{depth}(\lambda)\leq n}\sum_{(\Lambda,\Tc)} \mathbb C \xi_{\Lambda}\ot v_{\Tc}.
\eeq
 By Lemma \ref{adjoint-op}, these basis vectors in decomposition \eqref{Schur-Weyl-decom2} are normalized orthogonal, i.e.
$$\langle \xi_{\Lambda}\ot v_{\Tc}, \xi_{\Lambda'}\ot v_{\Tc'}\rangle =\delta_{\Lambda\Lambda'}\delta_{\Tc\Tc'}$$
for any Gelfand-Tsetlin patterns $\Lambda,\Lambda'$ and standard  Young tableaux $\Tc,\Tc'$.

An $n$-tuple $(a_1,\ldots,a_n)$ in $\mathbb{Z}_{\geq 0}^n$ such that %that is a solution of the equation $x_1+\cdots +x_k=m$
$a_1+\cdots +a_n=m$ is called  a weak composition of $m$ into $n$ parts or a weak $n$-composition of $m$.
It's clear that
the weights of $\GL_n$ in $(\mathbb C^n)^{\otimes m}$ are exactly weak $n$-compositions of $m$.
For any weak  $n$-composition $\mu$ of $m$, define the projection operator
$$\mathcal P_{\mu}: (\mathbb{C}^n)^{\ot m}\rightarrow ((\mathbb{C}^n)^{\ot m})_{\mu},$$
then $\mathcal P_{\mu}: U^{\lambda}\rightarrow (U^{\lambda})_{\mu}$ where $((\mathbb{C}^n)^{\ot m})_{\mu}$ and $(U^{\lambda})_{\mu}$ are the subspaces with weight $\mu$ as a $\GL_n$-module.

It is convenient to write $I=(1^{m_1},\ldots,n^{m_n})$ to specify the multiplicity $m_i$ of $i$ in the multiset $I=(i_1,\ldots,i_m)$. Here $m_i=m_i(I)= \mathrm{Card}\{j\in I |j=i\}$.
Let  $$m({I})=m_1!m_2 ! \cdots m_n !.$$  Denote by $$\mathfrak S_{I}=\mathfrak S_{m_1}\times\mathfrak S_{m_2}\times \cdots \times \mathfrak S_{m_n}$$  the Young subgroup  of $\mathfrak{S}_m$, and $\mathfrak S_{m_j}=1$ if $m_j=0$.

Let $\lambda\vdash n$, the irreducible representation $V^{\lambda}=\{v_{\Lambda}| sh(\Tc)=\lambda\}$ of the symmetric group $\mathfrak S_n$ is realized by the following action:
\begin{equation}
s_iv_{\Lambda}=\frac1{c_{i+1}-c_i}v_{\Lambda}+\frac{\sqrt{(c_{i+1}-c_i)^2-1}}{c_{i+1}-c_i}v_{s_i\Lambda}
\end{equation}
where $c_i$ is the content of node $i$ in $\Lambda$.
The primitive idempotents can be constructed by semistandard tableaux. Let $\Tc$ be a semistandard tableau of shape $\lambda$, the idempotent
\begin{equation}
\mathcal E^{\lambda}_{\Tc}=\frac1{\prod_{(i,j)\in \Tc} h(i, j)}\sum_{\sigma\in\mathfrak S_n}\langle \sigma^{-1} v_{\Tc}, v_{\Tc}\rangle \sigma,
\end{equation}
where  $h(i,j)$ are the hook length of the $(ij)$-box in the Young tableau $\Tc$ of shape $\lambda$.

We extend $\pi_{\lambda}$ to all $n \times n$ matrices $\Mat_n(\CC)=\mathfrak{gl}(n)$, then $(\mathbb{C}^n)^{\ot m}$ has a $\Mat_n(\CC)$-module structure.
The following is a generalization of Kostant's Theorem \cite[Thm.3]{Ko} and gives a representation theoretic interpretation of the immanant.

\begin{theorem}\label{q-Kostant-thm}
  Let $\lambda \vdash m$, $\mu$ be  a weak  $n$-composition of $m$, and let the multiset $I=(1^{\mu_1},2^{\mu_2},\ldots,n^{\mu_n})$ of $[n]$. Then
\beq\label{general-Kostant-iden}
\frac{\Imm_{\chi^{\lambda}}(A_I)}{m(I)}=tr(\mathcal P_{\mu}\ot 1)\circ\pi_{\lambda}(A)\circ \mathcal P_{\mu}|_{U^{\lambda}},
\eeq
for any matrix $A\in \Mat_n(\CC)$.
\end{theorem}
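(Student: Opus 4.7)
The plan is to compute the trace of a single operator on $(\CC^n)^{\ot m}$ in two different ways and match them via Schur--Weyl duality. Let
\[
e_{\lambda}=\frac{\chi^{\lambda}(1)}{m!}\sum_{\sigma\in\Sym_m}\chi^{\lambda}(\sigma)\,\sigma
\]
denote the central primitive idempotent in $\CC\Sym_m$ corresponding to $\lambda$, and consider the operator $T=\mathcal{P}_{\mu}\circ A^{\ot m}\circ e_{\lambda}$ on $(\CC^n)^{\ot m}$. The proof proceeds by showing that $\mathrm{tr}(T)$ equals both sides of \eqref{general-Kostant-iden} up to a common factor of $\chi^{\lambda}(1)$.

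On the representation-theoretic side, by \eqref{Schur-Weyl-decom} the action $A^{\ot m}$ on the $\lambda$-isotypic $U^{\lambda}\ot V^{\lambda}$ equals $\pi_{\lambda}(A)\ot\mathrm{id}_{V^{\lambda}}$, while $e_{\lambda}$ acts as the identity on $V^{\lambda}$ and annihilates every other isotypic summand. Since the weight grading is a $\GL_n$-invariant on the first tensor factor, $\mathcal{P}_{\mu}$ restricts to $\mathcal{P}_{\mu}|_{U^{\lambda}}\ot\mathrm{id}_{V^{\lambda}}$ on the $\lambda$-isotypic component. Lemma \ref{adjoint-op} guarantees that the basis splits orthonormally across the two factors, so the trace factors cleanly and yields $\mathrm{tr}(T)=\chi^{\lambda}(1)\cdot\mathrm{tr}(\mathcal{P}_{\mu}\,\pi_{\lambda}(A)\,\mathcal{P}_{\mu}|_{U^{\lambda}})$.

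On the combinatorial side, $\mathrm{tr}(T)$ is evaluated directly in the standard tensor basis $\{e_{i_1}\ot\cdots\ot e_{i_m}\}$. The weight-$\mu$ basis vectors are indexed by reorderings of $I=(1^{\mu_1},\ldots,n^{\mu_n})$; these are in bijection with $\Sym_m/\Sym_I$, so each reordering arises from exactly $m(I)$ permutations $\pi\in\Sym_m$ under the place action. A direct expansion of the diagonal matrix element $\langle e_{\pi\cdot I},A^{\ot m}\sigma\, e_{\pi\cdot I}\rangle$ produces the product $\prod_{l}A_{I_l,\,I_{\pi^{-1}\sigma^{-1}\pi(l)}}$. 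The conjugation substitution $\tau=\pi^{-1}\sigma^{-1}\pi$ then decouples $\pi$ from $\sigma$: the sum over $\pi$ contributes an overall factor of $m!$ (cancelling the $1/m!$ in $e_{\lambda}$), and the invariance $\chi^{\lambda}(\sigma)=\chi^{\lambda}(\tau^{-1})=\chi^{\lambda}(\tau)$ (using realness of symmetric group characters) identifies the remaining sum as $\Imm_{\chi^{\lambda}}(A_I)$ via \eqref{eq:immanant}. This gives $\mathrm{tr}(T)=\frac{\chi^{\lambda}(1)}{m(I)}\Imm_{\chi^{\lambda}}(A_I)$, and cancelling $\chi^{\lambda}(1)$ against the representation-theoretic expression delivers the theorem.

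The main obstacle is the combinatorial bookkeeping in the second step: one must parametrize weight-$\mu$ multi-indices correctly via cosets of $\Sym_I$ in $\Sym_m$, carry out the conjugation change of variables so that the character-bearing permutation and the submatrix-indexing permutation become the same, and track the precise factor $m(I)$ arising from the stabilizer. The Schur--Weyl step is essentially formal once the isotypic action of $e_{\lambda}$ is correctly interpreted.
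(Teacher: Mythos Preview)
Your argument is correct and is genuinely different from the paper's.

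You work with the \emph{central} idempotent $e_{\lambda}$ and compute $\mathrm{tr}\big(\mathcal{P}_{\mu}\,A^{\otimes m}\,e_{\lambda}\big)$ on the whole tensor space in two ways; Schur--Weyl duality and the factorization $\dim V^{\lambda}=\chi^{\lambda}(1)$ handle the representation-theoretic side without ever naming a basis of $U^{\lambda}$ or $V^{\lambda}$. The paper instead passes through the \emph{primitive} idempotents $\mathcal{E}^{\lambda}_{\Tc}$ and invokes Lemma~\ref{schur-weyl-corr} (imported from \cite{JLZ}) to identify $\sqrt{h_{\lambda}/m(I)}\,\mathcal{E}^{\lambda}_{\Tc}\cdot e_{I}$ with explicit Gelfand--Tsetlin/Young basis vectors $\xi_{\theta_{\mu}(\Tc)}\otimes v_{\Tc}$; it then collapses $h_{\lambda}\sum_{\Tc}\mathcal{E}^{\lambda}_{\Tc}=\sum_{\sigma}\chi^{\lambda}(\sigma)\sigma$ and expands over cosets $\Sym_m/\Sym_I$. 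Your conjugation substitution $\tau=\pi^{-1}\sigma^{-1}\pi$ accomplishes the same bookkeeping as the paper's coset decomposition, and the stabilizer count $m(I)$ appears for the same reason in both. What your route buys is self-containment: you need none of the Gelfand--Tsetlin apparatus, Young's orthonormal basis, or Lemma~\ref{schur-weyl-corr}. What the paper's route buys is that the primitive-idempotent expression \eqref{term1} is exactly the form reused in the proofs of Theorems~\ref{criterion} and the final inequality, so their detour is an investment for later sections.
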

\begin{remark}
  For $\mu=(1^{n})$, we obtain  Kostant's theorem in \cite[Thm. 3]{Ko}.
\end{remark}

\subsection{Proof of Theorem \ref{q-Kostant-thm}}
First we recall some notations. For any partition $\lambda$, the set of all Young tableaux of shape $\lambda$ is denoted
as
$YT(\lambda)$.
The set of semistandard Young tableaux of shape  $\lambda$ with entries at most $n$ is denoted by $SSYT(\lambda, n).$
The set of standard Young tableaux of shape
$\lambda$ is denoted as  $SYT(\lambda).$

Let $\lambda$ be a partition of $m$ and $\mu$ be a weak composition of $m$.
It is well-known that the semi-standard Young tableau $\Tc_{\lambda\mu}$ of shape $\lambda$ and weight $\mu$, where  $\mu_i$ equals the number of $i$'s in $\Tc_{\lambda\mu}$, is in one-to-one correspondence to a Gelfand-Tsetlin pattern $\Lambda=(\lambda_{ij})_{1 \leq i \leq m\atop 1\leq j \leq i}$ such that
\beq\label{pattern-tableau-corr}
\bal
\lambda_{ij}& = \text{the number of entries }\le i\text{ in the }j\text{th row of }\Tc_{\lambda\mu},\\
\mu_i&=\sum_{k=1}^{i}\lambda_{ik}-\sum_{s=1}^{i-1}\lambda_{i-1,s}.
\eal
\eeq
We will briefly write $\Tc_{\mu}=\Tc_{\lambda\mu}$ if the shape of $\Tc$ is clear from the context.
By the correspondence \eqref{pattern-tableau-corr}, the dimension of $(U^{\lambda})_{\mu}$  is the number of the semi-standard   tableaux $\Tc_{\lambda\mu}$.
 We also let $\{\xi_{\Tc_{\mu}}\}$  be the Gelfand-Tsetlin  basis vectors in $U^{\lambda}$.

 Given any weight $\mu=(\mu_1,\ldots,\mu_n)$, which can be written as a multiset $I=(1^{\mu_1} ,\ldots   ,n^{\mu_n})=(i_1, \ldots, i_m)$
  or a weak  $n$-composition of $m$. We define  the following map
%For any $I=(1^{\mu_1} ,\ldots   ,n^{\mu_n})=(i_1, \ldots, i_m)$, where $\mu=(\mu_1,\ldots,\mu_n)$ is  the weak  $n$-composition of $m$.
%For any partition $\lambda\vdash m$, the set of all Young tableaux of shape $\lambda$ is denoted
%as
%$YT(\lambda)$.
%The set of semistandard Young tableaux of shape  $\lambda$ with entries at most $n$ is denoted by $SSYT(\lambda, n).$
%The set of standard Young tableaux of shape
%$\lambda$ is denoted as  $SYT(\lambda).$
\beq
\bal
\theta_{\mu}: SYT(\lambda) \rightarrow YT(\lambda)\\
\eal
\eeq
which maps any standard Young tableau $\Tc$ to a Young tableau $\theta_{\mu}(\Tc)$ that replaces each node $r$ in $\Tc$  by $i_r$ for $1 \leq r \leq m$.
Note that $SSYT(\lambda, n)\subset SYT(\lambda)$ is contained in $YT(\lambda)$.%$\theta_{\mu}$.

\begin{lemma}[{\cite[Lemma 3.7]{JLZ}}]\label{schur-weyl-corr}
Let $\lambda \vdash m$ and $\mu$ be a weak $n$-composition of $m$ written as the multiset  $I=(1^{\mu_1},2^{\mu_2},\ldots,n^{\mu_n})=(i_1 \leq \ldots \leq i_m)$ of $[n]$, then as a $ U(\mathfrak{gl}_n) \times \Sym_m $-module
\begin{align*}
\sqrt{\frac{h_{\lambda}}{m(I)}}\mathcal{E}_{\Tc}^{\lambda}\cdot e_{i_1}\ot \cdots \ot e_{i_m}=\left\{\begin{array}{cc}
c\cdot \xi_{\theta_{\mu}(\Tc)}\ot v_{\Tc}&\text{ if } \theta_{\mu}(\Tc)  \in SSYT(\lambda, n),\\
0&\text{ if }\theta_{\mu}(\Tc)\notin  SSYT(\lambda, n),
\end{array}\right.
\end{align*}
where $h_{\lambda}=\prod_{(ij)\in \Tc}h(i,j)$ and $c$ is a nonzero constant. Explicitly,
$c=1$, if $\Tc$ is the unique pre-image of $\theta_{\mu}(\Tc)$. And
$\sum_{i=1}^s||c_i||^2=1$ for $\theta_{\mu}(\Tc_1)=\cdots=\theta_{\mu}(\Tc_s)$ ($s>1$).
In particular,
assume $m \leq n$, then
\beq
\sqrt{h_{\lambda}} \mathcal{E}_{\Tc}^{\lambda}\cdot e_1\ot \cdots \ot e_m=\xi_{\Tc}\ot v_{\Tc} .
\eeq
\end{lemma}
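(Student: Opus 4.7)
The plan is to convert the combinatorial definition of the immanant into an inner-product expression on $(\mathbb C^n)^{\ot m}$, and then use Schur-Weyl duality together with Lemma \ref{schur-weyl-corr} to identify the result with a trace over the weight space $(U^\lambda)_\mu$. Setting $e_I=e_{i_1}\ot\cdots\ot e_{i_m}$ and writing $\pi$ for the diagonal tensor action of $\GL_n$, I would first observe that $\prod_k a_{i_{\sigma(k)},i_k}=\langle\pi(A)e_I,\sigma^{-1}e_I\rangle$, and then move $\sigma$ to the other side via Lemma \ref{adjoint-op}. Summing against $\chi^\lambda$ gives $\Imm_{\chi^\lambda}(A_I)=\langle S_\lambda\cdot\pi(A)e_I,e_I\rangle$ where $S_\lambda=\sum_\sigma\chi^\lambda(\sigma)\sigma$. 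Since $\chi^\lambda(\sigma)=\sum_{\Tc\in SYT(\lambda)}\langle\sigma v_\Tc,v_\Tc\rangle$ and Young's orthonormal form is real orthogonal, comparing with the explicit formula for $\mathcal E^\lambda_\Tc$ in the preamble identifies $S_\lambda=h_\lambda\sum_{\Tc\in SYT(\lambda)}\mathcal E^\lambda_\Tc$.

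Next I use that each $\mathcal E^\lambda_\Tc$ commutes with $\pi(A)$ (by Schur-Weyl) and is a self-adjoint idempotent for the inner product \eqref{inner-product}, the self-adjointness coming from Lemma \ref{adjoint-op} combined with the identity $\chi^\lambda(\sigma)=\chi^\lambda(\sigma^{-1})$. Rewriting $\langle\pi(A)\mathcal E^\lambda_\Tc e_I,e_I\rangle=\langle\pi(A)\mathcal E^\lambda_\Tc e_I,\mathcal E^\lambda_\Tc e_I\rangle$ and applying Lemma \ref{schur-weyl-corr} substitutes $\mathcal E^\lambda_\Tc e_I=\sqrt{m(I)/h_\lambda}\,c_\Tc\,\xi_{\theta_\mu(\Tc)}\ot v_\Tc$ (vanishing when $\theta_\mu(\Tc)\notin SSYT(\lambda,n)$). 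Because $\pi(A)$ restricted to the $U^\lambda\ot V^\lambda$ isotypic component acts as $\pi_\lambda(A)\ot 1$ and the $v_\Tc$ are orthonormal, the $h_\lambda$ factors cancel and I obtain $\Imm_{\chi^\lambda}(A_I)=m(I)\sum_{\Tc\in SYT(\lambda)}|c_\Tc|^2\,\langle\pi_\lambda(A)\xi_{\theta_\mu(\Tc)},\xi_{\theta_\mu(\Tc)}\rangle$.

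Finally I would group the last sum over the fibres of $\theta_\mu$. For each semistandard $S\in SSYT(\lambda,\mu)$ the normalisation clause $\sum_{\Tc:\theta_\mu(\Tc)=S}|c_\Tc|^2=1$ from Lemma \ref{schur-weyl-corr} collapses the fibre to a single diagonal term, so the total becomes $m(I)\sum_{S\in SSYT(\lambda,\mu)}\langle\pi_\lambda(A)\xi_S,\xi_S\rangle$. Since $\{\xi_S\}_{S\in SSYT(\lambda,\mu)}$ is an orthonormal basis of the weight space $(U^\lambda)_\mu$, this is precisely $m(I)\,tr\bigl((\mathcal P_\mu\ot 1)\circ\pi_\lambda(A)\circ\mathcal P_\mu|_{U^\lambda}\bigr)$.

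The main technical subtlety lies in the bookkeeping around the constants $c_\Tc$: the map $\theta_\mu$ is typically not injective on $SYT(\lambda)$, so several standard tableaux can contribute to the same weight vector $\xi_S$, and it is exactly the partition-of-unity identity $\sum|c_\Tc|^2=1$ on each fibre (together with the vanishing of cross terms, which is guaranteed by orthonormality of the $v_\Tc$) that makes the multiplicities line up. A short preliminary check one must carry out is the self-adjointness of $\mathcal E^\lambda_\Tc$ for \eqref{inner-product}, which reduces to the reality and orthogonality of the representing matrices $\rho_\lambda(\sigma)$ in Young's orthonormal basis.
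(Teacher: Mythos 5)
There is a fundamental mismatch here: what you have written is not a proof of Lemma \ref{schur-weyl-corr} at all, but rather a proof of Theorem \ref{q-Kostant-thm} (the trace formula for $\Imm_{\chi^\lambda}(A_I)$) \emph{that uses Lemma \ref{schur-weyl-corr} as an input}. You explicitly invoke the lemma twice --- once to substitute $\mathcal E^\lambda_\Tc e_I=\sqrt{m(I)/h_\lambda}\,c_\Tc\,\xi_{\theta_\mu(\Tc)}\ot v_\Tc$, and again to use the partition-of-unity identity $\sum_{\Tc:\theta_\mu(\Tc)=S}|c_\Tc|^2=1$ on each fibre of $\theta_\mu$. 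Since the lemma is exactly the statement you were asked to prove, the argument is circular with respect to its target. (As it happens, the paper does not prove this lemma either; it imports it verbatim from \cite[Lemma 3.7]{JLZ}. Your argument is instead a reasonable sketch of the paper's proof of Theorem \ref{q-Kostant-thm}, which the paper carries out by a slightly different route, expanding $\chi^\lambda$ over cosets of the Young subgroup $\Sym_I$ rather than over $SYT(\lambda)$.)

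What an actual proof of Lemma \ref{schur-weyl-corr} would require, and what is entirely missing from your proposal, is an analysis of how the primitive idempotent $\mathcal E^\lambda_\Tc$ acts on the single pure tensor $e_{i_1}\ot\cdots\ot e_{i_m}$: one must show (i) that the image lies in the one-dimensional space $\mathbb C\,\xi_{\theta_\mu(\Tc)}\ot v_\Tc$ inside the $\lambda$-isotypic component of the Schur--Weyl decomposition \eqref{Schur-Weyl-decom2}, which amounts to tracking both the $\Sym_m$-isotype (forced to be $v_\Tc$ by the idempotent) and the $\GL_n$-weight (forced to be $\mu$ since $e_I$ is a weight vector and $\mathcal E^\lambda_\Tc$ commutes with the torus); (ii) that the vector vanishes precisely when $\theta_\mu(\Tc)$ fails to be semistandard, which is a genuinely combinatorial fact about the compatibility of the filling $\theta_\mu(\Tc)$ with the Gelfand--Tsetlin/crystal structure of $(U^\lambda)_\mu$; and (iii) the normalization statements for $c$, which follow from computing $\langle\mathcal E^\lambda_\Tc e_I,e_I\rangle$ using the self-adjointness and idempotency of $\mathcal E^\lambda_\Tc$ and summing over the fibre. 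None of these steps appears in your write-up, so as a proof of the stated lemma the proposal has no content; you would need to start over with the idempotent action itself as the object of study.
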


\begin{proof}[Proof of theorem \ref{q-Kostant-thm}]
By Lemma \ref{schur-weyl-corr},  the right side of \eqref{general-Kostant-iden} equals to
\begin{align*}
&tr(\mathcal P_{\mu}\ot 1)\circ\pi(A)\circ \mathcal P_{\mu}|_{U^{\lambda}}\\
&=\langle  \sqrt{\frac{h_{\lambda}}{m(I)}}(\mathcal P_{\mu}\ot 1) \circ \pi(A) \circ\sum_{\Tc}\mathcal{E}_{\Tc}\cdot e_{i_1}\ot \cdots \ot e_{i_m}, \sqrt{\frac{h_{\lambda}}{m(I)}}\sum_{\Tc}\mathcal{E}_{\Tc}\cdot e_{i_1}\ot \cdots \ot e_{i_m}\rangle\\
&=\frac{h_{\lambda}}{m(I)}\langle  (\mathcal P_{\mu}\ot 1) \circ \pi(A) \circ\sum_{\Tc}\mathcal{E}_{\Tc}\cdot e_{i_1}\ot \cdots \ot e_{i_m}, e_{i_1}\ot \cdots \ot e_{i_m}\rangle.
\end{align*}

By the relation of characters and primitive idempotents of the symmetric group, one has that
\begin{align*}
&\frac{h_{\lambda}}{m(I)}  (\mathcal P_{\mu}\ot 1) \circ \pi(A) \circ\sum_{\Tc}\mathcal{E}_{\Tc}\cdot e_{i_1}\ot \cdots \ot e_{i_m}\\
&=\frac{1}{m(I)} (\mathcal P_{\mu}\ot 1)\circ\pi(A) \circ\sum_{\tau\in \Sym_I,\atop \mu \in \mathcal{M}(\Sym_m/\Sym_I)} \chi^{\lambda}(\mu^{-1}\tau) P_{\mu^{-1}} P_{\tau} \cdot e_{i_1}\ot \cdots \ot e_{i_m}\\
&=\frac{1}{m(I)} (\mathcal P_{\mu}\ot 1)\circ\pi(A) \sum_{\tau\in \Sym_I,\atop \mu \in \mathcal{M}(\Sym_m/\Sym_I)} \chi^{\lambda}(\mu^{-1}\tau)e_{i_{\mu_1}}\ot \cdots \ot e_{i_{\mu_m}}
\end{align*}
where $\mathcal{M}(\Sym_m/\Sym_I)$ refers to the coset representatives in the quotient group $\Sym_m/\Sym_I$. The above expression then equals to
\ben
\bal
&=\frac{1}{m(I)}  \sum_{(j_1,\ldots,j_m)}\sum_{\tau\in \Sym_I,\atop \mu \in \mathcal{M}(\Sym_m/\Sym_I)} \chi^{\lambda}(\mu^{-1}\tau)
(\mathcal P_{\mu}\ot 1) a_{j_1,i_{\mu_1}}\cdots a_{j_m,i_{\mu_m}}\ot  e_{j_1}\ot \cdots \ot e_{j_m}\\
&=\frac{1}{m(I)}  \sum_{\si \in \Sym_m}\sum_{\tau\in \Sym_I,\atop \mu \in \mathcal{M}(\Sym_m/\Sym_I)} \chi^{\lambda}(\mu^{-1}\tau) a_{i_{\si_1},i_{\mu_1}}\cdots a_{i_{\si_m},i_{\mu_m}}\ot
 e_{i_{\si_1}}\ot \cdots \ot e_{i_{\si_m}},
\eal
\een
where the first sum is over all sequence $(j_1\ldots,j_m)$ of $[n]$.

As the action of $\Mat_n(\CC)$ commutes with that of $\Sym_m$ on the tensor space $(\mathbb{C}^{n})^{\ot m}$, so
\beq
\bal
&tr(\mathcal P_{\mu}\ot 1)\circ\pi(A)\circ \mathcal P_{\mu}|_{U^{\lambda}}\\
&=\frac{h_{\lambda}}{m(I)}\langle  (\mathcal P_{\mu}\ot 1) \circ \pi(A) \circ\sum_{\Tc}\mathcal{E}_{\Tc}\cdot e_{i_1}\ot \cdots \ot e_{i_m}, e_{i_1}\ot \cdots \ot e_{i_m}\rangle\\
&=\frac{1}{m(I)} \sum_{\tau\in \Sym_I,\atop \mu \in \mathcal{M}(\Sym_m/\Sym_I)} \chi^{\lambda}(\mu^{-1}\tau)
 a_{i_1,i_{\mu_1}}\cdots a_{i_{m},i_{\mu_m}}\\
&=\frac{\Imm_{\chi^{\lambda}}(A_I)}{m(I)}.
\eal
\eeq
\end{proof}

\section{Immanant inequalities and nonzero criterion}
Let $\mu$ be a weak $n$-composition of $m$, then $\mu$ is a weight of $\GL_n$.  We now present a criterion for non-vanishing of immanants of generalized principal submatrix of positive definite Hermitian or nonsingular totally nonnegative matrices based on the dimensions of the weight spaces.

Schur \cite{S} and Stembridge \cite{St1} have shown that the immanant of a singular matrix (whether positive semidefinite or totally nonnegative) is nonnegative.
Our criterion further  strengthened their important results.

\begin{theorem}\label{criterion}
Let $A$ be any an $n\times n$  either positive definite or nonsingular  totally nonnegative  matrix, $\lambda \vdash m$ and $\mu$ is a weak  $n$-composition of $m$. Given  the multiset $I=(1^{\mu_1},2^{\mu_2},\ldots,n^{\mu_n})$ of $[n]$, then
\beq
\Imm_{\chi^{\lambda}}(A_I)>0
\eeq
if and only if  $\dim(U^{\lambda})_{\mu}\neq 0$.
\end{theorem}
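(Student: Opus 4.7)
The plan is to convert the question into one about an operator trace on $U^\lambda$, via the trace formula of Theorem~\ref{q-Kostant-thm}. Since $m(I)>0$, the sign of $\Imm_{\chi^\lambda}(A_I)$ matches that of $\mathrm{tr}\bigl((\mathcal{P}_\mu\otimes 1)\pi_\lambda(A)\mathcal{P}_\mu|_{U^\lambda}\bigr)$. Expanding this trace in the orthonormal Gelfand--Tsetlin basis $\{\xi_\Lambda\}$ of $U^\lambda$, which refines the weight decomposition (orthonormality coming from Lemma~\ref{adjoint-op}), produces
\begin{equation*}
\Imm_{\chi^\lambda}(A_I)\;=\;m(I)\sum_{\Lambda:\,\mathrm{wt}(\Lambda)=\mu}\bigl\langle \pi_\lambda(A)\xi_\Lambda,\,\xi_\Lambda\bigr\rangle.
\end{equation*}
This immediately gives the ``only if'' direction: when $\dim(U^\lambda)_\mu=0$ the indexing set is empty and the immanant vanishes.

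For the ``if'' direction with $A$ positive definite Hermitian, I would take a Cholesky-type factorization $A=C^{*}C$ with $C\in\GL_n(\CC)$ invertible. Lemma~\ref{adjoint-op} gives $\pi_\lambda(C^{*})=\pi_\lambda(C)^{*}$, so $\pi_\lambda(A)=\pi_\lambda(C)^{*}\pi_\lambda(C)$ is a positive operator on $U^\lambda$, and $\pi_\lambda(C)$ is invertible since $\pi_\lambda$ restricts to a representation of $\GL_n$. Each diagonal matrix coefficient then reads $\|\pi_\lambda(C)\xi_\Lambda\|^{2}>0$, and summing over a nonempty index set yields the desired strict positivity.

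For $A$ nonsingular totally nonnegative, the analogous argument relies on the Cryer--Loewner--Whitney bidiagonal factorization $A=L\cdot D\cdot U$, where $L,U$ are finite products of elementary bidiagonal TN matrices $I+tE_{k+1,k}$ and $I+sE_{k,k+1}$ with parameters $t,s\geq 0$, and $D=\diag(d_1,\ldots,d_n)$ with $d_i>0$. Ordering the GT patterns by decreasing weight, the classical Gelfand--Tsetlin formulas render $\pi_\lambda(E_{k+1,k})$ strictly lower triangular and $\pi_\lambda(E_{k,k+1})$ strictly upper triangular, both with nonnegative matrix entries. Hence $\pi_\lambda(L)$ and $\pi_\lambda(U)$ are unit lower and unit upper triangular with nonnegative entries, $\pi_\lambda(D)$ is diagonal with positive eigenvalues $d_1^{\mu_1}\cdots d_n^{\mu_n}$ on the weight-$\mu$ subspace, and multiplying out gives
\begin{equation*}
\bigl\langle \pi_\lambda(A)\xi_\Lambda,\xi_\Lambda\bigr\rangle = d_1^{\mu_1}\cdots d_n^{\mu_n}+(\text{nonnegative cross terms})>0
\end{equation*}
for every $\Lambda$ with $\mathrm{wt}(\Lambda)=\mu$.

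The principal technical obstacle is the last point: verifying the unitriangular-with-nonnegative-entries property of the elementary TN factors in the GT basis. This reduces to the classical Gelfand--Tsetlin matrix coefficients of the Chevalley generators and a direct matrix-multiplication check using the weight-ordered basis; once it is in hand, the positive definite and totally nonnegative cases merge into the claimed equivalence $\Imm_{\chi^{\lambda}}(A_I)>0\Leftrightarrow\dim(U^\lambda)_\mu\neq 0$.
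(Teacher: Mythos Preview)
Your proposal is correct and follows essentially the same route as the paper: reduce to the trace formula of Theorem~\ref{q-Kostant-thm}, then use a Cholesky factorization in the positive definite case and a Whitney--Loewner bidiagonal factorization in the totally nonnegative case, exploiting that unipotent factors act unitriangularly (with nonnegative entries) on the weight-graded module. The only notable difference is packaging: you work directly in the orthonormal Gelfand--Tsetlin basis of $U^\lambda$, whereas the paper computes inside the tensor space via the idempotents $\mathcal{E}^\lambda_{\Tc}$, and your positive definite argument $A=C^{*}C\Rightarrow\langle\pi_\lambda(A)\xi,\xi\rangle=\|\pi_\lambda(C)\xi\|^{2}>0$ is a bit slicker than the paper's $U^{*}DU$ version---though the paper's version yields the explicit lower bound $d_{i_1}\cdots d_{i_m}\,m(I)\dim(U^\lambda)_\mu$, which is reused in the proof of the next theorem.
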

\begin{proof}
It follows from Theorem \ref{q-Kostant-thm} that
the immanant of $A_I$ is zero if $\dim(U^{\lambda})_{\mu}= 0$.

Conversely, for $\dim(U^{\lambda})_{\mu}\neq 0$, we show that $\Imm_{\chi^{\lambda}}(A_I)>0$.
We simply write $e_{I_{\mu}}=e_{i_1}\ot \cdots \ot e_{i_m}$.
By Theorem \ref{q-Kostant-thm}, we have that

\beq\label{term1}
\bal
\Imm_{\chi^{\lambda}}(A_I)=\sum_{\Tc}h_{\lambda} \langle  \pi(A) \circ\mathcal{E}^{\lambda}_{\Tc}\cdot e_{I_{\mu}}, \mathcal{E}^{\lambda}_{\Tc}\cdot e_{I_{\mu}}\rangle,
\eal
\eeq
where  the  sum is taken over all standard Young tableaux $\Tc$ with $\theta_{\mu}(\Tc)  \in SSYT(\lambda, n)$.

If $A$ is positive definite, by the Cholesky decomposition, we may write $A=\overline{U}^t DU$, where $U$ is upper triangular and unipotent, $D=\diag(d_1,\ldots,d_n)  \in \GL_n$ is a diagonal matrix with positive entries. Thus
\beq
\bal
  &\sum_{\Tc}h_{\lambda} \langle  \pi(A) \circ\mathcal{E}_{\Tc}\cdot e_{I_{\mu}}, \mathcal{E}_{\Tc}\cdot e_{I_{\mu}}\rangle\\
  &=\sum_{\Tc}h_{\lambda}\langle \pi(DU) \circ \mathcal{E}_{\Tc} \cdot e_{I_{\mu}},\pi(U) \circ \mathcal{E}_{\Tc} \cdot  e_{I_{\mu}}\rangle\\
  &\geq\sum_{\Tc}d_{i_1}\cdots d_{i_m}h_{\lambda} \langle \mathcal{E}_{\Tc}\cdot e_{I_{\mu}},  \mathcal{E}_{\Tc}\cdot e_{I_{\mu}}\rangle\\
  &=d_{i_1}\cdots d_{i_m}m(I)\dim (U^{\lambda})_{\mu}.
\eal
\eeq

If $A$ is  nonsingular totally nonnegative, then by the Whitney theorem in \cite{W,Lo,Ko},  we can write $A=U_{-}U_{+}D$, where $U_{+}$ is a product of elements of the form $\exp tE_{i,i+1}$ with $t>0$, $U_{-}$ is a product of elements of the form $\exp tE_{i+1,i}$ with $t>0$, and $D=\diag(d_1,\ldots,d_n) \in \GL_n$ is a diagonal matrix with positive entries.
Thus if $\dim(U^{\lambda})_{\mu}\neq 0$,
\beq
\bal
  &\sum_{\Tc}h_{\lambda} \langle  \pi(A) \circ\mathcal{E}_{\Tc}\cdot e_{I_{\mu}}, \mathcal{E}_{\Tc}\cdot e_{I_{\mu}}\rangle\\
  &=\sum_{\Tc}h_{\lambda}\langle \mathcal{E}_{\Tc}\circ\pi(U_{-}U_{+}D) \cdot e_{I_{\mu}}, e_{I_{\mu}}\rangle\\
    &=\sum_{\Tc}h_{\lambda}d_{i_1}\cdots d_{i_m}\langle \mathcal{E}_{\Tc}\circ\pi(U_{-}U_{+}) \cdot e_{I_{\mu}}, e_{I_{\mu}}\rangle\\
    &\geq\sum_{\Tc}d_{i_1}\cdots d_{i_m}h_{\lambda} \langle \mathcal{E}_{\Tc}\cdot e_{I_{\mu}},  e_{I_{\mu}}\rangle\\
    &=d_{i_1}\cdots d_{i_m} m(I) \dim (U^{\lambda})_{\mu}>0.
\eal
\eeq
\end{proof}

Based on the inequalities established by Schur and Stembridge, the immanant of a singular matrix (whether positive semidefite or
totally nonnegative) is nonnegative. Theorem \ref{criterion} provides a
sufficient condition for the immanant  of  such a singular matrix to be positive.
%\begin{theorem}
%Let $\lambda \vdash m$ and $\mu$ is a weak  $n$-composition of $m$. Given  the multiset $I=(1^{\mu_1},2^{\mu_2},\ldots,n^{\mu_n})$ of $[n]$, assume that $A$ be any an $n\times n$  either positive semidefinite or  totally nonnegative matrix such that $A$ is a generalized principal
%submatrix of some positive definite Hermitian or nonsingular totally nonnegative
%matrix,  then if $\dim(U^{\lambda})_{\mu}\neq 0$,
%\beq
%\Imm_{\chi^{\lambda}}(A)>0.
%\eeq
%\end{theorem}

Denote by $O_{\mu}$ the orbit of weight $\mu$ under the action of the Weyl group $\Sym_n$. Let $I=(1^{\mu_1},2^{\mu_2},\ldots,n^{\mu_n})$ be the multiset of $[n]$ corresponding to $\mu$. Finally, we have the following generalization of Kostant's Theorem 4 in \cite{Ko}.
\begin{theorem}
Let  $\lambda \vdash m$, $\mu$ is a weak  $n$-composition of $m$, then
\beq
\sum_{\nu \in O_{\mu}}tr(\mathcal P_{\nu}\ot 1)\circ\pi(A)\circ \mathcal P_{\nu}|_{U^{\lambda}}\geq |O_{\mu}|\dim (U^{\lambda})_{\mu}\det(A),
\eeq
whenever $A$ is either positive semidefinite or  totally nonnegative.
\end{theorem}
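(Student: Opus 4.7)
The plan is to reduce the left-hand side orbit-element by orbit-element to the inequality already embedded in the proof of Theorem~\ref{criterion}, and then to close with an elementary symmetric-function inequality on $O_\mu$. First, Theorem~\ref{q-Kostant-thm} applied to each $\nu\in O_\mu$ gives
\[
\mathrm{tr}(\mathcal P_\nu \otimes 1)\circ \pi(A) \circ \mathcal P_\nu|_{U^\lambda} \;=\; \frac{\Imm_{\chi^\lambda}(A_{I_\nu})}{m(I_\nu)},
\]
and because $m(I_\nu)$ depends only on the multiset of part multiplicities of $\mu$, it is constant on $O_\mu$; likewise $\dim(U^\lambda)_\nu=\dim(U^\lambda)_\mu$ for every $\nu\in O_\mu$ by the Weyl-invariance of weight multiplicities in $U^\lambda$.

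Second, I would re-run the computation from the proof of Theorem~\ref{criterion} for each orbit element. For $A$ positive definite with Cholesky factorization $A=\overline{U}^t D U$, or $A$ nonsingular totally nonnegative with Whitney--Lusztig factorization $A=U_-U_+D$ with $D=\diag(d_1,\ldots,d_n)$ having positive entries, that argument yields
\[
\mathrm{tr}(\mathcal P_\nu \otimes 1)\circ \pi(A)\circ \mathcal P_\nu|_{U^\lambda} \;\geq\; d^\nu\,\dim(U^\lambda)_\nu,\qquad d^\nu := \prod_{i=1}^{n} d_i^{\nu_i}.
\]
The singular positive semidefinite or singular totally nonnegative case is immediate, since $\det(A)=0$ while each $\Imm_{\chi^\lambda}(A_{I_\nu})\geq 0$ by Schur's or Stembridge's inequality applied to the positive semidefinite or totally nonnegative submatrix $A_{I_\nu}$, so the left-hand side is already nonnegative. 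Summing over $\nu\in O_\mu$ and pulling out the Weyl-invariant dimension yields
\[
\sum_{\nu\in O_\mu}\mathrm{tr}(\mathcal P_\nu \otimes 1)\circ \pi(A)\circ \mathcal P_\nu|_{U^\lambda} \;\geq\; \dim(U^\lambda)_\mu \sum_{\nu\in O_\mu} d^\nu.
\]

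The final step is the symmetric-function inequality $\sum_{\nu\in O_\mu} d^\nu \geq |O_\mu|\det(A) = |O_\mu|\,d_1\cdots d_n$. For this I would apply the AM--GM inequality to the $|O_\mu|$ positive reals $\{d^\nu\}_{\nu\in O_\mu}$: by the Weyl-symmetry of the orbit each $d_i$ appears with total exponent $|O_\mu|m/n$ in $\prod_{\nu} d^\nu$, so $\prod_{\nu} d^\nu = \det(A)^{|O_\mu|m/n}$ and AM--GM produces a bound of $|O_\mu|\det(A)^{m/n}$.

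The main obstacle is exactly the exponent: AM--GM delivers $\det(A)^{m/n}$ rather than the targeted $\det(A)^{1}$. These coincide when $m=n$, which already recovers Kostant's orbit theorem and, with $\mu=(1^n)$, Schur's and Stembridge's inequalities. To close the gap for $m\neq n$, I would need to retain, rather than discard, the positive contributions from higher weight spaces that the crude bound $\mathrm{tr}\geq d^\nu\dim(U^\lambda)_\nu$ throws away, and then exploit the Weyl-symmetry of the orbit sum to marshal those tails into the missing factor $\det(A)^{1-m/n}$. Packaging this refinement uniformly for both the Cholesky and the Whitney--Lusztig factorizations --- or, equivalently, establishing an appropriate Muirhead-type dominance on $O_\mu$ --- is the step I expect to be the principal difficulty.
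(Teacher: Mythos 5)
Your argument tracks the paper's own proof step for step: the per-orbit-element lower bound $\mathrm{tr}(\mathcal P_\nu\ot 1)\circ\pi(A)\circ\mathcal P_\nu|_{U^\lambda}\ge d^\nu\dim(U^\lambda)_\nu$ extracted from the proof of Theorem~\ref{criterion}, Weyl-invariance of $\dim(U^\lambda)_\nu$, and then AM--GM over the orbit. The obstacle you flag at the last step is not a defect of your write-up but a genuine problem with the statement itself. As you compute, each $d_i$ appears in $\prod_{\nu\in O_\mu}d^\nu$ with exponent $m|O_\mu|/n$, so the geometric mean is $\det(A)^{m/n}$; the paper's proof asserts at this point that the geometric mean equals $d_1\cdots d_n=\det(A)$, which is valid only when $m=n$. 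Your proposed repair --- recovering the missing factor $\det(A)^{1-m/n}$ from the positive contributions discarded in the bound $\mathrm{tr}\ge d^\nu\dim(U^\lambda)_\nu$ --- cannot work, because the inequality as stated is false for $m\neq n$. Indeed, the left-hand side is homogeneous of degree $m$ in the entries of $A$ while $\det(A)$ is homogeneous of degree $n$; concretely, for $A=\epsilon I_n$ with $0<\epsilon<1$ and $m>n$ the left-hand side is $|O_\mu|\dim(U^\lambda)_\mu\,\epsilon^m$, which is strictly smaller than $|O_\mu|\dim(U^\lambda)_\mu\,\epsilon^n$ (already $n=1$, $m=2$, $A=(1/2)$ gives $1/4<1/2$).

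What you actually establish --- $\sum_{\nu\in O_\mu}\mathrm{tr}(\mathcal P_\nu\ot 1)\circ\pi(A)\circ\mathcal P_\nu|_{U^\lambda}\ \ge\ |O_\mu|\dim(U^\lambda)_\mu\det(A)^{m/n}$ for nonsingular $A$, together with the trivial singular case --- is the correct homogeneous form of the theorem, and it specializes to the stated inequality (hence to Kostant, Schur and Stembridge) exactly when $m=n$. So rather than trying to ``close the gap,'' you should record your version as the theorem; the only remaining bookkeeping is to make sure the degenerate orbit elements and the singular case are handled as you indicate, which they are.
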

\begin{proof}
It suffices to show that
\beq
\bal
   \sum_{\nu \in  O_{\mu}} \sum_{\Tc} \frac{h_{\lambda}}{m(I)} \langle  \pi(A) \circ\mathcal{E}^{\lambda}_{\Tc}\cdot e_{I_{\nu}}, \mathcal{E}^{\lambda}_{\Tc}\cdot e_{I_{\nu}}\rangle\geq  |O_{\mu}|\dim (U^{\lambda})_{\mu}\det(A).
\eal
\eeq
From the proof in theorem \ref{criterion}, we know that
\beq
\bal
    \sum_{\nu \in  O_{\mu}}\sum_{\Tc}\frac{h_{\lambda}}{m(I)} \langle  \pi(A) \circ\mathcal{E}_{\Tc}\cdot e_{I_{\nu}}, \mathcal{E}_{\Tc}\cdot e_{I_{\nu}}\rangle
   \geq \sum_{\nu \in O_{\mu}}d_{i_1}\cdots d_{i_m}\dim (U^{\lambda})_{\nu},
\eal
\eeq
where  $D=\diag(d_1,\ldots,d_n)$ is a diagonal matrix. For any $\nu_1,\nu_2\in O_{\mu}$, there exists an element $w\in \Sym_n$ such that $w\cdot \nu_1=\nu_2$ and $\dim(U^{\lambda})_{\nu_1} =\dim(U^{\lambda})_{w\cdot \nu_1} =\dim(U^{\lambda})_{\nu_2}$. By the inequality of arithmetic and geometric means,
\beq
\bal
    \sum_{\nu \in  O_{\mu}}d_{i_1}\cdots d_{i_m}
    &\geq |O_{\mu}|  \prod_{\nu \in O_{\mu}}(d_{i_1}\cdots d_{i_m})^{\frac{1}{| O_{\mu}|}}.
\eal
\eeq
Note that  $\prod_{\nu \in O_{\mu}}(d_{i_1}\cdots d_{i_m})$ is a symmetric polynomial in the variables $d_1,\ldots,d_n$.
Hence $\prod_{\nu \in O_{\mu}}(d_{i_1}\cdots d_{i_m})^{\frac{1}{| O_{\mu}|}}= d_1\cdots d_n=\det(A)$. Moreover,
\beq
    \sum_{\nu \in  O_{\mu}}\sum_{\Tc}\frac{h_{\lambda}}{m(I)} \langle  \pi(A) \circ\mathcal{E}_{\Tc}\cdot e_{I_{\nu}}, \mathcal{E}_{\Tc}\cdot e_{I_{\nu}}\rangle
    \geq| O_{\mu}|\dim (U^{\lambda})_{\mu}\det(A).
\eeq
\end{proof}

\bigskip
\centerline{\bf Acknowledgments}
\medskip
The work is supported in part by the National Natural Science Foundation of China grant nos.
12171303 and 12001218, the Humboldt Foundation, the Simons Foundation grant no. 523868,
and the Fundamental Research Funds for the Central Universities grant nos. CCNU24JC001, CCNU25JC025 and CCNU25JCPT031.

\bibliographystyle{amsalpha}

\end{document}